\documentclass[11pt,twoside]{article}
\usepackage{latexsym,amsmath,amssymb,times}

\newif\ifdviwin

\usepackage{titlesec}
	
\titleformat{\section}{\large\bfseries\center}{\thesection}{1em}{\vspace{-.3cm}}
\titleformat{\subsection}[runin]{\bfseries}{\thesubsection}{1em}{}

\usepackage[english]{babel}
\usepackage{indentfirst}
\usepackage[mathscr]{eucal}
\usepackage{amssymb,amsmath,amsfonts}
\usepackage{fancybox,fancyhdr}
\usepackage{graphicx}
\usepackage[utf8]{inputenc}
\usepackage{mathrsfs}
\usepackage{float}
\usepackage{color}
\usepackage{upgreek}
\usepackage[pdftex]{hyperref}
\hypersetup{colorlinks=true,linkcolor=blue,citecolor=blue}

\usepackage{enumitem}
\setlist[enumerate]{
  topsep=6pt,      % espacio antes y después de la lista
  partopsep=0pt,   % espacio extra si la lista empieza un párrafo
  itemsep=0pt,     % espacio entre ítems
  parsep=4pt,      % espacio entre párrafos *dentro* de un mismo ítem
  leftmargin=*,    % opcional: ajusta el margen
}

\newif\ifdviwin

\dviwintrue

\def\r{\mathbb R}
\def\h{\mathbb H}
\def\s{\mathbb{S}}

\def\ekt{\mathbb{E}(\kappa,\tau)}
\def\rkt{\mathcal{R}(\kappa,\tau)}
\def\ricn{\mathrm{Ric}(N)}
\def\onabla{\overline{\nabla}}
\def\Q{\mathcal{Q}}
\def\D{\mathbb{D}}
\def\t{\textbf{t}}
\def\im{\mathrm{Im}}
\def\n{\textbf{n}}
\def\div{\mathrm{div}}
\def\z{\zeta}
\def\m{\mathbb M}

 \newtheorem{defi}{Definition}[section]
 \newtheorem{teo}{Theorem}
  
 \newtheorem{pro}[defi]{Proposition}
 \newtheorem{cor}[defi]{Corollary}
 \newtheorem{lem}[defi]{Lemma}
 
 \newtheorem{remark}[defi]{Remark}

 \newenvironment{proof}{\rm \trivlist \item[\hskip \labelsep{\it
      Proof}.]}{\nopagebreak \hfill $\Box$ \endtrivlist}

\numberwithin{equation}{section}

\begin{document}
\thispagestyle{empty}
%\mbox{}\vspace{0.4cm}

\begin{center}
%\rule{16cm}{1.5pt}\vspace{0.5cm}

\renewcommand{\thefootnote}{\,}
{\Large \bf Stable constant mean curvature disks with circular boundary in homogeneous three-manifolds
\footnote{\hspace{-.75cm}
\emph{Mathematics Subject Classification: 53A10, 53C42, 53C30, 58J50} 

\noindent\emph{Keywords}: Constant mean curvature disk; homogeneous 3-manifolds; stability.}}\\
\vspace{0.5cm} {Antonio Bueno}\\
%\rule{16cm}{1.5pt}
\end{center}
\vspace{.5cm}

\begin{abstract}
Let $H>0$ and $\kappa,\tau\in\r$ satisfy $4H^2+\kappa>0$ and $\kappa\leq 4\tau^2$. We prove that an immersed stable disk of constant mean curvature $H$ and circular boundary in the space $\ekt$ is a spherical cap of the canonical rotational $H$-sphere. 
\end{abstract}

\section{Introduction}

The characterization of the round sphere as the only closed surface of positive constant mean curvature (CMC) under further geometric assumptions is a classical topic of research in Differential Geometry in the large. Among all the literature regarding this question, we highlight three results that have reshaped and motivated different lines of research. We recall them by chronological appearance. 

In 1955, Hopf \cite{hop} proved that a CMC immersion of a topological sphere in $\r^3$ must be totally umbilical. The proof is based on the \emph{Hopf differential}, a quadratic differential that vanishes at umbilical points and that is holomorphic when the mean curvature is constant. Since every holomorphic quadratic differential on the Riemann sphere vanishes identically, the result follows. The arguments introduced by Hopf are specific of dimension two and they only depend on the Codazzi equation for surfaces in $\r^3$, hence Hopf's theorem is also generalized to $\h^3$ and $\s^3$.

Almost at the same time, Alexandrov \cite{ale} characterized the round sphere as the only closed and embedded CMC surface. For the proof, Alexandrov introduced the \emph{moving plane technique} that combines ambient reflections and the maximum principle of the CMC equation. Since then, it has become a fundamental tool for proving symmetry and uniqueness results for other geometric functionals and elliptic PDEs. Alexandrov's theorem easily adapts when the ambient space has reflection-type isometries and the only non-trivial issue is to guarantee that the sweeping procedure starts; in fact, it suffices the existence of two linearly-independent directions of reflection to conclude that the surface has genus zero; see \cite[Theorem 5.3.4]{dhm}. In particular, this holds in the space forms of arbitrary dimension, with the additional assumption in $\s^n$ that the surface lies contained in an open hemisphere in order to guarantee that the sweeping procedure starts. 

Later, Barbosa and do Carmo \cite{bdc} exhibited the rigidity of the round sphere under the assumption of being stable under volume-preserving variations. Their proof combines suitable test functions for the stability problem and Minkowski-type integrals. In fact, their method was also adapted to any space form \cite{bdce}.

Since the foundations of these results, two distinguished lines of inquiry appeared: either generalize them to further ambient spaces; or still formulate them in 3-dimensional space forms for the case that the CMC surface is compact but has non-empty boundary, making special emphasis in $\r^3$. We give a brief outline of the development of each line.

Besides space forms, the next most symmetric spaces are the homogeneous, simply connected 3-manifolds having 4-dimensional isometry group. They admit a Riemannian submersion onto the complete, simply connected surface of constant curvature $\kappa$ and with bundle curvature $\tau$. Each such space is denoted by $\ekt$. If $\tau=0$ the submersion trivializes to a Riemannian product; if $\tau\neq0$ we have the Heisenberg space if $\kappa=0$; the universal cover of the special linear group if $\kappa<0$; or the Berger spheres if $\kappa>0$. It is worth mentioning that a CMC sphere exists in $\ekt$ if and only if $4H^2+\kappa>0$ and thus this restriction has to be imposed if we aim to characterize a sphere. Such spheres, denoted by $S_H$, are no longer umbilical but they are rotational. Reflection-type isometries only appear\footnote{When $\kappa=4\tau^2$ we recover $\r^3$ if both vanish, or $\s^3$ with a homothetic metric. Both are limit cases.} when $\tau=0$ and Alexandrov's theorem holds in the product spaces as well, with the additional assumption when $\kappa>0$ that the projection of the surface onto $\s^2(\kappa)$ lies in a hemisphere. When $\tau\neq0$ and $\kappa\leq0$, Alexandrov's problem is still open. Regarding Barbosa-do Carmo's result, Souam \cite{sou} studied the stability of closed surfaces in the product spaces obtaining different rigidity results. When $\tau\neq0$, as far as the author knows, there is not a straightforward analog of Barbosa-do Carmo's theorem; we refer to the work \cite{tour} of Torralbo-Urbano for the stability of CMC surfaces in these homogeneous manifolds. The case of Hopf's extension has a special interest on its own. In \cite{abro1,abro2}, Abresch-Rosenberg defined a quadratic differential on every surface in $\ekt$ that is holomorphic when the mean curvature is constant, the \emph{Abresch-Rosenberg differential}, and that vanishes on every $S_H$. This differential was used to extend Hopf's result to all the $\ekt$ spaces, proving that each $S_H$ is unique among CMC immersions of a topological sphere. In general, the uniqueness of spheres in 3-dimensional manifolds has been an active field of research in the past years, culminating in some groundbreaking results. We refer the reader to \cite{dhm,dami,gamiJDG,mmpr} and references therein. 

The second line of research concerns CMC surfaces with boundary in $\r^3$. We refer the reader to \cite{loplibro} and more recently \cite{lop1} for a collection of open problems in the field. In particular, it is natural to ask for the boundary analogs of Alexandrov, Hopf and Barbosa-do Carmo. Now, the candidate surface to characterize is no longer a closed sphere, but a \emph{spherical cap}: one of the components of the round sphere delimited by a parallel. Let $\Gamma$ be a planar circle in $\r^3$ and assume that $M$ is a compact CMC surface with boundary $\Gamma$. We ask whether $M$ agrees with one of the spherical caps having $\Gamma$ as boundary in the case that: \textbf{1.} $M$ is a disk; \textbf{2.} $M$ is embedded; \textbf{3.} $M$ is stable. At the moment of the preparation of this paper, independent works of Espinar \cite{esp} and Maximo and Nunes \cite{manu} gave an affirmative answer to the first problem, which is the boundary version of Hopf's result. See also \cite{lop2} for a generalization to $\h^3$ and $\s^3$ following the ideas of \cite{manu}. The second problem is the analog of Alexandrov's theorem and it is known to hold if $M$ lies in one of the half-spaces delimited by the plane containing the boundary; in this case, Alexandrov's reflection technique can be adapted. The third problem is the analog of Barbosa-do Carmo and was known to hold when $M$ is a disk \cite{alp}.

These problems can be also stated in the $\ekt$ spaces, and not too much more is known. The first and third problems in all their generality are unknown to hold. Only when $\tau=0$ and they are combined, Zhang \cite{zha} proved that a stable CMC disk with circular boundary in $\m^2(\kappa)\times\r$ is a spherical cap of $S_H$; he also approached non-compact rotational examples. Regarding the second problem, only when $\tau=0$ and the surface has circular boundary in a horizontal plane, Alexandrov's technique can be adapted when the surface satisfies the same Euclidean hypothesis. If $\tau\neq0$, as far as the author knows, there are no results in this line.

In this paper we characterize \emph{spherical caps} of $S_H$ as the only stable immersions of a CMC disk with boundary a \emph{circle} in the Heisenberg space, the universal cover of the special linear group and some Berger spheres, that is when $\tau\neq0$. By a \emph{circle} we mean a closed simple curve invariant under ambient rotations. Specifically, we prove:

\begin{teo}\label{th}
Let $\psi:M\to\ekt$ be a stable immersion of a disk with constant mean curvature $H>0$ and circular boundary. Suppose
\begin{equation}\label{eqcondiciones}
4H^2+\kappa>0,\qquad \kappa\leq 4\tau^2.
\end{equation}
Then, $\psi$ is an embedding onto a closed spherical cap of $S_H$.
\end{teo}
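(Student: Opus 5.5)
We follow the strategy of Alías–López–Palmer for stable CMC disks with circular boundary in $\r^3$, and of Zhang in $\m^2(\kappa)\times\r$. The idea is to build, from the rotational Killing field, a Jacobi-type function vanishing on $\partial M$. Stability then forces it to vanish identically, so $M$ is rotational, and a rotational CMC disk bounded by a circle is a cap of $S_H$.

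\textbf{Setup.} After an ambient isometry, assume the boundary circle $\Gamma$ is invariant under the one-parameter group of rotations about the fiber through the origin. Let $V$ be the corresponding rotational Killing field, $N$ the unit normal, and $u=\langle V,N\rangle$ the rotational Jacobi function. Since $V$ is Killing, $Lu=0$ on $M$, where $L=\Delta+|\sigma|^2+\mathrm{Ric}(N)$ is the Jacobi operator. Since $V$ is tangent to $\Gamma$, we have $u=0$ on $\partial M$.

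\textbf{Step 1: vanishing of $\int_M u$ or a correction.} The volume-preserving stability hypothesis only gives $Q(f)\ge0$ for $f$ with $f|_{\partial M}=0$ and $\int_M f=0$. Here $Q(u)=-\int_M uLu=0$. So if $\int_M u=0$, then $u$ is a minimizer of $Q$ on that class. A Lagrange multiplier argument then gives $Lu=c$ constant, and since $Lu=0$ we get $c=0$. To show $\int_M u=0$, use the divergence theorem with the flux formula for the Killing field $V$. Because $\Gamma$ is rotationally invariant, the flux of $V$ across $\partial M$ reduces to $\int_{\partial M}\langle V,\nu\rangle$, which is zero since $V$ is tangent to $\Gamma$ and hence orthogonal to the conormal $\nu$. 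Equivalently, $\int_M u$ is proportional to the rotational derivative of the enclosed volume, which vanishes because the boundary is rotation-invariant. Thus $u$ is an admissible test function with $Q(u)=0$.

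\textbf{Step 2: $u\equiv0$.} Suppose $u\not\equiv0$. Then $u$ lies in the kernel of the stability problem with the mean-zero constraint. We need to show this is impossible on a disk unless $u\equiv0$. The standard route is nodal: $u$ is a Jacobi field vanishing on $\partial M$, and the structure of its nodal set is controlled by the rotational symmetry. Following Alías–López–Palmer, on a disk we exploit a second family of Jacobi functions, $\langle X,N\rangle$ for the horizontal Killing fields (translations in $\r^3$, or the Killing fields of $\ekt$ not preserving the axis). A suitable combination $f=u+\lambda w$ has mean zero and vanishes on $\partial M$. Via Courant's nodal domain theorem, the nodal domains would yield a negative direction, contradicting stability.

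The main obstacle is here. In $\ekt$ with $\tau\neq0$ the horizontal Killing fields do not preserve the boundary circle. I would instead use the holomorphic Abresch–Rosenberg differential $Q_{AR}$: on a disk, the quantity $\langle V,N\rangle$ is linked to $\im(Q_{AR}(\partial_z,\partial_z)z^2)$, a harmonic-type function. The conditions (\ref{eqcondiciones}) enter precisely to guarantee that $\mathrm{Ric}(N)+|\sigma|^2$ has the right sign. This sign control makes the first eigenvalue argument on nodal domains work, ensuring that the Dirichlet problem for $L$ on each nodal domain of $u$ has nonpositive first eigenvalue, so that more than one nodal domain contradicts stability.

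\textbf{Step 3: conclusion.} Once $u\equiv0$, the surface $M$ is invariant under the rotations, so it is a rotational CMC disk with boundary $\Gamma$ meeting the axis. Solving the rotational ODE for profile curves with $H>0$ and $4H^2+\kappa>0$, the only smooth solution meeting the axis is the profile of $S_H$, so $M$ is a closed spherical cap of $S_H$. Such a cap is embedded, so $\psi$ is an embedding onto it.
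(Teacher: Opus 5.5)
Your proposal has a genuine gap in Step 2, which is where the proof actually lives. The claim that ``more than one nodal domain contradicts stability'' is false for volume-preserving stability. On each nodal domain $\Omega_i$, $u$ is a signed Dirichlet eigenfunction with eigenvalue $0$, whatever the sign of $|\sigma|^2+\ricn$. Two nodal domains therefore only give one negative Dirichlet eigenvalue of $-L$, and weak stability allows that. Indeed, since $\int_M u=0$, the only zero-mean combinations of $u\chi_{\Omega_1}$ and $u\chi_{\Omega_2}$ are multiples of $u$, so no negative direction appears. You need \emph{three} nodal domains, so that Courant's theorem produces two negative eigenvalues. Two further points are off. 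The hypotheses \eqref{eqcondiciones} play no role in controlling $|\sigma|^2+\ricn$. And the relevant link with the Abresch--Rosenberg differential is not between $u$ itself and $\im(\z^2\Q)$ on the disk. Your Step 1 is correct, since $2H\int_M u=\int_{\partial M}\langle V,\mu\rangle=0$, but it gives nothing new: $Lu=0$ was already known, and the argument never needs $\int_M u=0$.

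The missing idea is a \emph{boundary} identity between $\partial_\mu u$ and a holomorphic quadratic differential.
\begin{enumerate}
\item Write $N=\cos\alpha\,e_r+\sin\alpha\,\eta$ along $\partial M$. One computes $\partial_\mu u=\epsilon\frac{RW}{\delta_R}\alpha'$ and $\Q(\t,\t)=(-\tau+iH)\alpha'-(H+\frac{a}{R})(D+iE)$, with $E=\tau\delta_R/W^2$. When $\tau\neq0$, $\im\Q(\t,\t)$ is therefore not proportional to $\alpha'$.
\item Multiplying by $m=D-iE$ gives $\im(m\Q(\t,\t))=\mathcal{B}\alpha'$. Since $m$ lies in the lower half-plane, it can be replaced by a nonvanishing holomorphic $G$ on the disk with the same boundary argument (harmonic extension of $\arg m$).
\item Positivity of $\mathcal{B}=\frac{4H^2+\kappa}{4}+\frac{4\tau^2-\kappa}{4W^2}(1+HRa)$ is exactly where \eqref{eqcondiciones} enters. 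It also uses the flux bound $HR\le1$, which comes from $\div_M\xi^\top=2H\nu$ and the divergence theorem.
\item Then $\Phi=\epsilon G\Q\,d\z^2$ satisfies $\im\Phi(\t,\t)=c\,\partial_\mu u$ with $c>0$.
\item Because $\z^2\phi$ vanishes to order two at $0$, $\im(\z^2\phi)|_{\partial\D}$ is $L^2$-orthogonal to $1,\cos\vartheta,\sin\vartheta$. Hence it changes sign at least four times unless $\Phi\equiv0$.
\item Four alternating signs of $\partial_\mu u$ force three nodal domains of $u$ by planar topology, which contradicts stability.
\end{enumerate}

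Your Step 3 is also too quick, in two places. You must prove that the rotational surface meets the axis, for example by a degree argument for the vertical projection of $\partial M$. And ``the cap is embedded'' does not make $\psi$ injective: an immersion into $S_H$ could a priori cover a cap several times or cross $\Gamma$. One needs a properness/covering argument on $M\setminus\psi^{-1}(\Gamma)$ showing that $\psi$ is a diffeomorphism onto a single cap. In Berger spheres this also uses that $S_H$ is embedded when $\kappa\le4\tau^2$.
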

It is worth mentioning that our method also recovers the conclusions achieved in \cite{alp,zha} and in particular the space $\s^2(\kappa)\times\r$, as the restriction $\kappa\leq 4\tau^2$ is only needed when $\tau\neq0$. 

Let us discuss the conditions \eqref{eqcondiciones}. The first ensures that the rotational sphere $S_H$ exists. The second only affects to certain Berger spheres and has a geometric flavor. It was proved in \cite[Theorem 1]{tour} the existence of $\alpha_0\in[0,1]$ such that for $4\tau^2/\kappa\geq\alpha_0$, CMC spheres are stable. If $\kappa\leq 4\tau^2$ we are in such hypotheses and conclude that $S_H$ is stable. Also, Torralbo \cite{tor} proved that when $\kappa\leq 4\tau^2$ then $S_H$ is embedded. Note that even if $\kappa>4\tau^2$ we could have a stable spherical cap even if $S_H$ is not stable itself.

We detail next the organization of the paper: in Section \ref{sec2}, we introduce a local coordinate model for the $\ekt$ spaces that is global if $\kappa\leq0$. We recall the fundamental data that fulfills an immersed surface in an $\ekt$ space and define the Abresch-Rosenberg differential. We also introduce the notion of stability and the Jacobi operator. In Section \ref{sec21} we prove two technical lemmas that will be useful in the sequel. The first proves that the conormal derivative of a non-identically zero Jacobi function on a stable disk cannot change of sign at the boundary of the disk too many times. The second proves that the imaginary part of a holomorphic quadratic differential on a disk changes of sign at least four times at the boundary of the disk. In Section \ref{sec3} we prove Theorem \ref{th}. If $X$ denotes the Killing vector field in $\ekt$ whose flow are the rotations and $N$ is a unit normal of the immersion, we apply Corollary \ref{corollary} to the function $u=\langle X,N\rangle$ and to a suitable modification of the Abresch-Rosenberg differential to prove that both of them vanish and hence conclude with the rotational symmetry of the immersion. Finally, some topological arguments will allow us to conclude that the immersion is actually an embedding.

\section{Preliminaries}\label{sec2}

Given $\kappa\in\r$ and $\tau>0$, let $\ekt$ be the homogeneous, simply connected 3-manifold with isometry group of dimension 4, which admits a Riemannian submersion onto the complete, simply connected surface of constant curvature $\kappa$ and with bundle curvature $\tau$. If $\pi:\ekt\to\mathbb{M}^2(\kappa)$ is the Riemannian submersion, the \emph{fibers} $\pi^{-1}(q),\ q\in\mathbb{M}^2(\kappa)$ are the integral lines of a unit Killing vector field $\xi$ and whose flow is known as the \emph{vertical translations}. Furthermore, the fibers are geodesics and when $\kappa>0$ they are closed curves. For any $X\in\mathfrak{X}(\ekt)$ we have
$$
\onabla_X\xi=\tau X\wedge\xi,
$$ 
where $\onabla$ is the Levi-Civita connection of $\ekt$. In the sequel, after a change of the orientation we assume $\tau>0$. Furthermore, we discard the limit case $\kappa=4\tau^2$ since otherwise we are in a space form.

A local coordinate model is the following: let $r=\sqrt{x^2+y^2}$ and define $\delta(r)=1+\frac{\kappa}{4}r^2$. Then, $\ekt$ is locally isometric to the space
\begin{equation}\label{eqmetricaEkt}
\rkt=\{(x,y,z)\in\r^3\colon \delta(r)>0\},\qquad \langle,\rangle=\frac{dx^2+dy^2}{\delta(r)^2}+\left(dz+\frac{\tau}{\delta(r)}(ydx-xdy)\right)^2.
\end{equation}
When $\kappa\leq0$ this model is global, and if $\kappa>0$ the model is the universal covering of the space minus one fiber. The vector fields
$$
E_1=\delta(r)\partial_x-\tau y\xi,\quad E_2=\delta(r)\partial_y+\tau x\xi,\quad E_3=\partial_z=\xi,
$$
are a global orthonormal frame of $\rkt$. Given $R>0$ with $\delta(R)>0$, we define the coordinate circle $\Gamma_R$ in $\{z=0\}$ as the image of the curve
$$
\gamma_R(t)=(R\cos t,R\sin t,0),\qquad t\in[0,2\pi].
$$

We recall the fundamental data associated to an isometric immersion $\psi:M\to\ekt$. Consider $\z=s+it$ a conformal parameter for the conformal structure given by the induced metric and let $\partial_\z=(\partial_s-i\partial_t)/2,\ \partial_{\bar{\z}}=(\partial_s+i\partial_t)/2$. Denote by $N$ to a unit normal to the immersion, $S$ the shape operator, $\sigma$ the second fundamental form and $J$ the $\pi/2$ rotation in $TM$, i.e.
$$
Su=-\onabla_u N,\quad \sigma(u,v)=\langle Su,v\rangle,\quad Ju=N\wedge u.
$$
The \emph{fundamental data} of $\psi$ is the 5-tuple $(\lambda^2|d\z|^2,\nu,H,p\,d\z^2,A\,d\z)$, where $H$ is the mean curvature and
$$
\lambda^2=2\langle\psi_\z,\psi_{\bar{\z}}\rangle,\quad \nu=\langle N,\xi\rangle,\quad p=\langle\onabla_{\partial_\z}\psi_\z,N\rangle,\quad A=\langle\psi_\z,\xi\rangle.
$$
The function $\nu$ is commonly known as the \emph{angle function} and $p\,d\z^2$ is the Hopf differential. The integrability equations in $\ekt$ are expressed in terms of these fundamental data. See \cite{dan,femiICM} for details. A fundamental tool in the study of CMC surfaces in the $\ekt$ spaces is the Abresch-Rosenberg differential, defined by
\begin{equation}\label{eqQab}
\Q d\z^2=\left(2(H+i\tau)p-(\kappa-4\tau^2)A^2\right)d\z^2.
\end{equation}
Using the compatibility equations of \cite[Theorem 3.2]{femiICM}, one can see that
$$
\Q_{\bar{\z}}=2pH_{\bar{\z}}+\lambda^2(H+i\tau)H_\z,
$$
and hence $\Q$ is holomorphic on any CMC surface. Furthermore, $\Q$ is known to vanish on every $S_H$ but the converse is not true; the situation here is that if $\Q$ vanishes identically on a CMC surface, then such surface is invariant by a continuous 1-parameter group of isometries of $\ekt$, see \cite[Lemma 2.3.6]{dhm}. This heavily contrast with the situation in $\r^3$, where the vanishing of the Hopf differential yields total umbilicity.

Now, we recall the notion of stability. Given $M$ a CMC surface in $\ekt$, it is known that $M$ is a critical point of the area functional for compactly supported variations $\{M_t\colon t\in(-\epsilon,\epsilon),\ M_0\equiv M\}$ that fix both $\partial M$ and the volume. If $\mathcal{A}(t)$ is the area of $M_t$, then $M$ is said to be \emph{stable} if $\mathcal{A}''(0)\geq0$ for every such variation. Stability of $M$ is equivalent to
$$
\mathcal{A}''(0)=-\int_M u(\Delta u+(|\sigma|^2+\ricn )u)\geq0,
$$
for every $u\in C_0^\infty(M)$ with $\int_Mu=0$. Here, $\sigma$ is the second fundamental form of $M$ and $\ricn$ is the Ricci curvature along the direction of the unit normal $N$ of $M$. The Jacobi operator is defined then by $L=\Delta+|\sigma|^2+\ricn$, which is an elliptic, self-adjoint operator that takes the form
$$
L=\Delta+|\sigma|^2+\kappa-2\tau^2-(\kappa-4\tau^2)\nu^2=\Delta+\textbf{q}.
$$
Integration by parts allows us to define the bilinear form
$$
I(u,v)=\int_M\langle\nabla u,\nabla v\rangle-\textbf{q}uv,
$$
where this time $u,v$ belong to the first Sobolev space $H_0^1(M)$. Thus, stability means
\begin{equation}\label{eqoperadorestable}
I(u,u)\geq0,\quad\mbox{for all $u\in H_0^1(M)$ such that $\int_M u=0$}.
\end{equation}
The ellipticity and self-adjointness of $-L$ imply that the Dirichlet problem
$$
\left\lbrace
\begin{array}{lll}
Lu+\lambda u=0&\mathrm{in}&M,\\
u=0&\mathrm{on}&\partial M
\end{array}
\right.
$$
has as \emph{spectrum} an ordered sequence of eigenvalues
$$
\lambda_1<\lambda_{2,1}=\cdots=\lambda_{2,m_2}<\lambda_{3,1}=\cdots=\lambda_{3,m_3}<\cdots\nearrow\infty
$$
being always the first simple; here, the subindex $m_k$ stands for the multiplicity of each ordered distinct eigenvalue $\lambda_k$. The \emph{index} of $M$ is the number of negative eigenvalues of $-L$, counting multiplicity. It is well-known that the existence of two negative eigenvalues forbids stability: we could define a zero mean function by a linear combination of two eigenfunctions where $I$ is negative. Given an eigenfunction $u_k$ of an eigenvalue $\lambda_k\equiv\lambda_{k,1}=\cdots=\lambda_{k,m_k}$, the nodal set of $u_k$ is $u_k^{-1}(0)$, and the nodal domains of $u_k$ are the connected components of $M\backslash u_k^{-1}(0)$. Courant's nodal domain theorem \cite{cohi} states that the number of nodal domains of $u_k$ is at most $1+\sum_{j=1}^{k-1}m_j$, being always $m_1=1$.

\subsection{Two technical lemmas.}\label{sec21}

Next we introduce two technical lemmas that will be of importance to prove Theorem \ref{th}. The first one states that the conormal derivative of a non-trivial Jacobi function of a stable disk cannot alternate of sign on the boundary many times.

\begin{lem}\label{lemderivadaconormal}
Let be $M$ a disk and $u\not\equiv0$ a solution of $Lu=0$ with $u_{|_{\partial M}}=0$ and $L$ satisfying \eqref{eqoperadorestable}. If $\mu$ is the unit exterior conormal of $M$ along $\partial M$, then $\partial_\mu u$ cannot have four alternating boundary signs.
\end{lem}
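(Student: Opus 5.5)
The plan is to combine the index bound coming from \eqref{eqoperadorestable} with Courant's nodal domain theorem, and then to obtain a contradiction from the planar topology of the disk.

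\textbf{Step 1: locate the eigenvalue $0$.} Since $u\not\equiv0$ solves $Lu=0$ with $u_{|_{\partial M}}=0$, the number $0$ is a Dirichlet eigenvalue of $-L$, say $0=\lambda_k$, and $u$ is an eigenfunction for it. As recalled above, two negative eigenvalues are incompatible with \eqref{eqoperadorestable}: a suitable linear combination of the two eigenfunctions would have zero mean and make $I$ negative. Hence at most $\lambda_1$ is negative. Consequently $\lambda_k=0$ forces $k=1$ or $k=2$. Courant's nodal domain theorem then says that $u$ has at most $1+m_1=2$ nodal domains.

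\textbf{Step 2: turn boundary signs into nodal regions.} Suppose $\partial_\mu u$ has four alternating boundary signs. That is, there are points $p_1,p_2,p_3,p_4\in\partial M$, in cyclic order, with $\partial_\mu u(p_1)>0$, $\partial_\mu u(p_2)<0$, $\partial_\mu u(p_3)>0$, $\partial_\mu u(p_4)<0$. Since $u=0$ on $\partial M$ and $\mu$ is the exterior conormal, $u<0$ at interior points close to $p_1$ and $p_3$, while $u>0$ at interior points close to $p_2$ and $p_4$. In particular $u$ changes sign, so $0\neq\lambda_1$; indeed a first eigenfunction has a sign. Thus $0=\lambda_2$ and there are exactly two nodal domains, $\Omega^-=\{u<0\}$ and $\Omega^+=\{u>0\}$, each connected.

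\textbf{Step 3: topological contradiction.} Using connectedness of $\Omega^-$, join the interior points near $p_1$ and $p_3$ by an arc in $\Omega^-$. Close it up with the two short segments into $p_1$ and $p_3$, which also lie in $\{u<0\}$ except at their endpoints. This gives an arc $\alpha$ in the disk joining two boundary points, with interior contained in $M\setminus\overline{\Omega^+}$. By the Jordan curve theorem, $\alpha$ separates $M$ into two components, one containing $p_2$ and the other containing $p_4$, because these points lie on different boundary arcs between $p_1$ and $p_3$. However, the points of $\Omega^+$ near $p_2$ and near $p_4$ must be joined by a path inside $\Omega^+$, and such a path cannot cross $\alpha\subset\{u\le0\}$. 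This contradiction yields at least three nodal domains, against Step 1.

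\textbf{Main obstacle.} The delicate part is regularity near the boundary. One must check that the sign of $\partial_\mu u$ at a boundary point really gives the sign of $u$ on a small half-disk neighborhood. This follows from $u\in C^1$ up to the boundary together with $u=0$ on $\partial M$. One must also check that the short segments can be chosen so that $\alpha$ is a simple arc meeting $\partial M$ only at $p_1$ and $p_3$. Both facts follow from standard elliptic boundary regularity for $L$ on the smooth disk. After that, the separation argument is purely planar topology.
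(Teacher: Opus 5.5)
Your argument is correct and is essentially the paper's proof: stability allows at most one negative Dirichlet eigenvalue, so Courant's theorem limits $u$ to two nodal domains, while four alternating signs of $\partial_\mu u$ produce four boundary-adjacent regions where $u$ alternates in sign, and these cannot fit into only two nodal domains. You run the implication in the contrapositive order and spell out, via the crosscut (Jordan) separation argument, the planar-topology step that the paper only asserts; this fills in detail rather than changing the route.
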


\begin{proof}
Let us uniformize $M$ by $\overline{\D}$. Arguing by contradiction, assume that $\partial_\mu u$ alternates of sign at $\xi_1,...,\xi_4\in\partial M$. Now, we find four disjoint sets $\Omega_1,...,\Omega_4$ around $\xi_1,...,\xi_4$ respectively such that $u$ has the opposite sign of $\partial_\mu u$. This can be done by taking interior normal coordinates around $\xi_k$ and using that $u=0$ at $\partial\D$. Indeed, given $e^{i\vartheta}\in\partial\D$ we consider normal coordinates
$$
\Psi(\vartheta,r)=\exp_{e^{i\vartheta}}(-r\mu(e^{i\vartheta})),
$$
where $r\geq0$ since $\mu$ is the exterior conormal. Thus, $\partial_r\Psi(\vartheta,0)=-\mu(e^{i\vartheta})$. Define the function $U(\vartheta,r)=u(\Psi(\vartheta,r))$. One has $U(\vartheta,0)=0$. For fixed $\vartheta$ we make Taylor in the $r$-variable, obtaining
$$
U(\vartheta,r)=U(\vartheta,0)+r\partial_r U(\vartheta,0)+\mathcal{O}(r^2).
$$
The first term vanishes and for the second one we obtain $-\partial_\mu u(e^{i\vartheta})$ by the chain rule. Thus,
$$
u(\Psi(\vartheta,r))=-r\partial_\mu u(e^{i\vartheta})+\mathcal{O}(r^2).
$$
By continuity, there exists a compact arc $\mathcal{J}$ containing $p$ such that the term $\mathcal{O}(r^2)$ can be uniformly bounded by $Cr^2$, where $C>0$ and thus the sign of $u$ is opposite to the one of $\partial_\mu u$ at $\mathcal{J}$.

We state that $u$ has at least three nodal domains. Indeed, if $u$ has only two nodal domains, the alternating sets $\Omega_1,\Omega_3$ where $u$ has the same sign should belong to the same connected component. But this configuration would make impossible to join the sets $\Omega_2$ and $\Omega_4$ where $u$ has the same sign and opposite to the one of $\Omega_1\cup\Omega_3$.  This contradiction proves that $u$ has at least three nodal domains and hence Courant's nodal domain theorem yields that the zero eigenvalue is at least the third distinct one of $-L$. This implies the existence of two negative eigenvalues of $-L$, contradicting the stability.

\end{proof}

The next result proves that the imaginary part of a holomorphic quadratic differential on a disk must alternate of sign a certain number of times.

\begin{lem}\label{lemmaholomorfa}
Let be $M$ a disk and $\Phi$ a holomorphic quadratic differential on $M$, smooth up to $\partial M$. Then, either $\Phi\equiv0$ or $\im\Phi(\t,\t)$ changes of sign at least four times on $\partial M$, where $\t$ is a unit boundary tangent.
\end{lem}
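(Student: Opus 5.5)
Uniformize $M$ conformally by $\overline{\D}$ with coordinate $\z$, so that $\Phi=\phi(\z)\,d\z^2$ with $\phi$ holomorphic in $\D$ and smooth up to $\partial\D$. Along $\partial\D$, $\z=e^{i\theta}$, the unit tangent is $\t=\lambda^{-1}ie^{i\theta}\partial_\z$ (up to conjugation conventions), where $\lambda$ is the conformal factor. Hence
$$
\Phi(\t,\t)=-\lambda^{-2}e^{2i\theta}\phi(e^{i\theta}),\qquad \im\Phi(\t,\t)=-\lambda^{-2}\,\im\big(\z^2\phi(\z)\big)\big|_{\z=e^{i\theta}}.
$$
The question is therefore about the boundary sign changes of $\im h$, where $h(\z)=\z^2\phi(\z)$ is holomorphic on $\D$, continuous up to the boundary, and vanishes to order at least two at the origin. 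Assume $\phi\not\equiv0$; then $h\not\equiv0$.

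The first step handles the case where $v=\im h$ has fewer than four sign changes on $\partial\D$. The number of sign changes on a circle is even, so there are $0$ or $2$. Consider the harmonic function $v$ with boundary values $g=v|_{\partial\D}$.
\begin{itemize}
\item[(a)] If $g$ does not change sign, say $g\ge0$, then either $g\equiv0$ or $v>0$ in $\D$ by the maximum principle. But $v(0)=\im h(0)=0$, so $g\equiv0$.
\item[(b)] If $g$ changes sign exactly twice, pick an arc splitting $\partial\D$ so that $g\ge0$ on one arc $I$ and $g\le0$ on the complementary arc. Choose a linear function $\ell(\z)=a+\mathrm{Re}(b\bar{\z})$ whose zero line passes through the two endpoints of $I$, with the same sign as $g$ on each arc. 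Then $g\,\ell\ge0$ on $\partial\D$.
\end{itemize}
Now use the mean-value (orthogonality) identities. Since $h(0)=h'(0)=0$, Poisson/Fourier considerations give
$$
\int_{\partial\D}g\,d\theta=2\pi v(0)=0,\qquad \int_{\partial\D}g\,e^{\pm i\theta}\,d\theta=0,
$$
because the Fourier coefficients of $g$ of order $0$ and $\pm1$ come from the degree $0$ and $1$ Taylor coefficients of $h$, and these vanish. Therefore $\int_{\partial\D} g\,\ell\,d\theta=0$ with $g\ell\ge0$, which forces $g\ell\equiv0$, so $g\equiv0$. Case (a) is the special instance $\ell\equiv1$.

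The second step shows that $g\equiv0$ forces $h\equiv0$. If $\im h=0$ on $\partial\D$, then $\im h$ is a harmonic function vanishing on the boundary, so $\im h\equiv0$. Hence $h$ is a real constant, which equals $h(0)=0$. Thus $\phi\equiv0$ and $\Phi\equiv0$.

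The main obstacle is the bookkeeping of the boundary representation. This involves three points:
\begin{itemize}
\item making precise that the factor relating $\Phi(\t,\t)$ and $\im(\z^2\phi)$ is a positive function, so that sign changes are preserved;
\item justifying the Fourier-coefficient vanishing with only boundary smoothness, which follows from $C^0$ regularity up to $\partial\D$ and the Cauchy formula on circles of radius $r\to1$;
\item interpreting \emph{changes of sign} correctly when $g$ vanishes on intervals, by counting sign alternations between points where $g\neq0$.
\end{itemize}
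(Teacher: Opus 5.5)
Your proposal is correct and follows essentially the paper's route: you reduce to the sign of $\im(\z^2\phi)$ on $\partial\D$, use that its Fourier modes of order $0,\pm1$ vanish, and test against a function $a+b_1\cos\vartheta+b_2\sin\vartheta$ that vanishes at the two sign-change points (your $\ell$ is exactly the paper's $P$). You also write out the step that $\im h\equiv0$ on $\partial\D$ forces $h\equiv0$, which the paper leaves implicit when it passes from $\Phi\not\equiv0$ to $w\not\equiv0$.
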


\begin{proof}
We uniformize $M$ by $\bar{\D}$, with coordinates $\z=s+it$, write $\langle,\rangle=\lambda^2|d\z|^2$ and express $\Phi=\phi(\z)d\z^2$. If we parametrize the boundary $|\z|=1$ by $\beta(\vartheta)=e^{i\vartheta}$, the tangent vector $\beta'$ has length $\lambda$ and thus $\t=\pm\frac{1}{\lambda}\beta'(\vartheta)=\frac{\pm i}{\lambda}\beta(\vartheta)$. Consequently, $d\z(\t)=(ds+idt)(\frac{\pm i}{\lambda}e^{i\vartheta})=\frac{\pm i}{\lambda}\z$ and therefore 
$$
\Phi(\t,\t)=\phi(\z)d\z^2(\t,\t)=\phi(\z)\frac{-\z^2}{\lambda^2}.
$$
In particular, along $\partial\D$ we have
$$
\im\Phi(\t,\t)=-\frac{1}{\lambda^2}\im(\z^2\phi(\z)).
$$
By defining $F(\z)=\z^2\phi(\z)$ we conclude $\mathrm{Im}F=-\lambda^2 \im\Phi(\t,\t)$ on $\partial\D$.

Assume that $\Phi\not\equiv 0$ and let us consider $w(\vartheta)=\im F(e^{i\vartheta})$. We prove the existence of $\vartheta_1,...,\vartheta_4\in[0,2\pi)$ such that $w(\vartheta_1),...,w(\vartheta_4)$ have alternating signs. For that, bear in mind that the holomorphic function has, at least, an order two zero at $\z=0$, i.e. $F(0)=F'(0)=0$ and thus its Taylor series is  $F(\z)=\sum_{n=2}^\infty c_n\z^n$, on a disk $D_r$ of radius $r<1$. Thus, at its boundary $\z=re^{i\vartheta}$ and denoting $c_n=a_n+ib_n$ we have
$$
\im F(r e^{i\vartheta})=\sum_{n=2}r^n(a_n\sin(n\vartheta)+b_n\cos(n\vartheta)).
$$
Set $w_r(\vartheta)=\im F(re^{i\vartheta})$. By the orthogonality of the functions $\{1,\cos(n\vartheta),\sin(n\vartheta)\}$ in $L^2(0,2\pi)$ we have
$$
\int_0^{2\pi}w_r(\vartheta)\{1,\cos\vartheta,\sin\vartheta\}\,d\vartheta=0.
$$
Now, since $F$ is continuous up to $\partial\D$ and $w_r\to w$ uniformly, letting $r\nearrow1$ we conclude
\begin{equation}\label{eqalternanciaceros}
\int_0^{2\pi}w(\vartheta)\,d\vartheta=\int_0^{2\pi}w(\vartheta)\cos\vartheta\,d\vartheta=\int_0^{2\pi}w(\vartheta)\sin\vartheta\,d\vartheta=0.
\end{equation}
The first equation in \eqref{eqalternanciaceros} yields the existence of $\vartheta_1,\vartheta_2\in[0,2\pi)$ such that $w$ alternates of sign; without losing generality assume $w(\vartheta_1)>0$ and $w(\vartheta_2)<0$. In particular, there exists $n_1\in(\vartheta_1,\vartheta_2)$ such that $w(n_1)=0$. This cannot be the final configuration for the alternating signs of $w$ since $w(0)=w(2\pi)$ but $w_{|_{[0,\vartheta_1]}}>0$ and $w_{|_{[\vartheta_2,2\pi]}}<0$. Thus, there must exist $n_2\in(0,\vartheta_1)$ such that $w(n_2)=0$ and $w<0$ for $\vartheta<n_2$ close enough to $n_2$. This gives three alternating signs of $w$. If there were no four alternating points, a compact identification of $[0,2\pi]$ with $\partial\D$ would give two points $\xi_1=e^{i n_1}$ and $\xi_2=e^{i n_2}$ that separate $\partial\D$ into two complementary arcs where $\im F$ take different signs. Let $\vartheta\in[n_2,n_2+2\pi]$ and define
$$
P(\vartheta)=\cos\left(\vartheta-\frac{n_1+n_2}{2}\right)-\cos\left(\frac{n_1-n_2}{2}\right).
$$
If $\vartheta\in(n_2,n_1)$ then $P(\vartheta)>0$, and if $\vartheta\in(n_1,n_2+2\pi)$ then $P(\vartheta)<0$, with $P(n_1)=P(n_2)=0$. After reversing the sign if necessary we have $wP\geq0$ everywhere and $wP>0$ on a non-empty open interval. Thus,
$$
\int_0^{2\pi}w(\vartheta)P(\vartheta)\,d\vartheta>0.
$$
But this contradicts \eqref{eqalternanciaceros} by just evaluating the expression of $P$.  

\end{proof}

Combining Lemmas \ref{lemderivadaconormal} and \ref{lemmaholomorfa}, we obtain.

\begin{cor}\label{corollary}
With the notation of Lemmas \ref{lemderivadaconormal}, \ref{lemmaholomorfa}, assume that on $\partial M$ we have
$$
\im\Phi(\t,\t)=c\partial_\mu u,
$$
where $c>0$ is smooth. Then, $\Phi\equiv0$ and $u\equiv0$.
\end{cor}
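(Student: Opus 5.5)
The plan is a case analysis on whether $\Phi$ vanishes identically, playing Lemma \ref{lemmaholomorfa} off against Lemma \ref{lemderivadaconormal} through the boundary identity $\im\Phi(\t,\t)=c\,\partial_\mu u$. Since $c>0$ on $\partial M$, the two boundary functions $\im\Phi(\t,\t)$ and $\partial_\mu u$ have the same sign at every boundary point. In particular they vanish at the same points and change sign at the same points.

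First I treat the case $\Phi\not\equiv0$. Lemma \ref{lemmaholomorfa} then gives four points of $\partial M$, in cyclic order, at which $\im\Phi(\t,\t)$ takes alternating signs; this is exactly what its proof produces with $\vartheta_1,\dots,\vartheta_4$. Since $c>0$, $\partial_\mu u$ takes the same alternating signs at these four points. In particular $\partial_\mu u\not\equiv0$, and since $u$ is smooth up to the boundary with $u_{|\partial M}=0$, this forces $u\not\equiv0$. We are then exactly in the hypotheses of Lemma \ref{lemderivadaconormal}: $u\not\equiv0$, $Lu=0$, $u_{|\partial M}=0$, and stability \eqref{eqoperadorestable} holds. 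That lemma forbids $\partial_\mu u$ from having four alternating boundary signs, which is a contradiction. Hence $\Phi\equiv0$.

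Next, with $\Phi\equiv0$, the boundary identity gives $\partial_\mu u=0$ on $\partial M$, so $u$ vanishes on $\partial M$ together with its conormal derivative. Since $u$ solves the second order elliptic equation $Lu=\Delta u+\textbf{q}u=0$ with smooth bounded potential $\textbf{q}$, unique continuation for the Cauchy problem gives $u\equiv0$. Concretely, I would extend $u$ by zero across $\partial M$ into a slightly larger surface extending $M$. The extension is $C^1$ and a weak solution of $Lu=0$, since the boundary terms in the weak formulation vanish because both $u$ and $\partial_\mu u$ do. It vanishes on an open set, so Aronszajn's unique continuation theorem yields $u\equiv0$.

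The main obstacle is matching the notion of ``changes of sign at least four times'' in Lemma \ref{lemmaholomorfa} with ``four alternating boundary signs'' in Lemma \ref{lemderivadaconormal}. I would read both as the existence of four cyclically ordered boundary points with strictly alternating nonzero values, which is precisely what the proof of Lemma \ref{lemmaholomorfa} delivers. Under that reading the transfer through the positive factor $c$ is immediate. The second, more routine point is justifying unique continuation up to the boundary, which is handled by the extension argument above.
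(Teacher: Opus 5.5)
Your proof is correct and is exactly the combination of Lemmas~\ref{lemderivadaconormal} and~\ref{lemmaholomorfa} that the paper intends; the paper states the corollary without further proof. Your reading of ``changes of sign four times'' as four cyclically ordered points of strictly alternating sign is what the proof of Lemma~\ref{lemmaholomorfa} produces, and the one step the paper leaves implicit, passing from $\Phi\equiv0$ (so that $u$ and $\partial_\mu u$ both vanish on $\partial M$) to $u\equiv0$, genuinely needs unique continuation for the Cauchy problem, which your zero-extension plus Aronszajn argument correctly supplies.
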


\section{Proof of Theorem \ref{th}}\label{sec3}

This section is devoted to prove Theorem \ref{th}. The idea is to apply Corollary \ref{corollary} to a suitable holomorphic quadratic differential and a Jacobi function that carry geometric information when they both vanish. It will surprise no one that the holomorphic quadratic differential has to be related with the Abresch-Rosenberg differential. As for the function $u$, we consider the Killing vector field $X=-y\partial_x+x\partial_y$, whose flow are the rotations about the $z$-axis in the coordinate model \eqref{eqmetricaEkt}, and define $u=\langle X,N\rangle$. Thus, $u\equiv0$ on a surface means that $X$ is everywhere tangent to it and hence rotational.

For our purpose, it is better to introduce polar coordinates $x=r\cos\theta,\ y=r\sin\theta$. The metric \eqref{eqmetricaEkt} is expressed by
$$
\langle,\rangle=\frac{dr^2+r^2d\theta^2}{\delta(r)^2}+\left(dz^2-\frac{\tau r^2}{\delta(r)}d\theta\right)^2.
$$
Thus, in this model $X=\partial_\theta$. We drop the dependence of $\delta$ on $r$ and define the orthonormal frame
$$
e_r=\delta\partial_r,\qquad e_\theta=\frac{\delta}{r}\partial_\theta+\tau r\xi,\qquad \xi=\partial_z.
$$
By defining $\beta_r=\frac{4-\kappa r^2}{4r}$, a straightforward computation yields
$$
[e_r,e_\theta]=-\beta_r e_\theta+2\tau\xi,\qquad [e_r,\xi]=[e_\theta,\xi]=0.
$$
The connection matrix is
$$
\begin{array}{lll}
\overline{\nabla}_{e_r}e_r=0&\overline{\nabla}_{e_r}e_\theta=\tau\xi&\overline{\nabla}_{e_r}\xi=-\tau e_\theta\\
\overline{\nabla}_{e_\theta}e_r=\beta_re_\theta-\tau\xi&\overline{\nabla}_{e_\theta}e_\theta=-\beta_re_r&\overline{\nabla}_{e_\theta}\xi=\tau e_r\\
\overline{\nabla}_{\xi}e_r=-\tau e_\theta&\overline{\nabla}_{\xi}e_\theta=\tau e_r&\overline{\nabla}_{\xi}\xi=0
\end{array}
$$
Geometrically, the parameter $\beta_r$ is the curvature of the projected circle in the base. From now on, the coordinate radius of the circle is $R>0$ and 
$$
\delta_R=1+\frac{\kappa R^2}{4},\qquad W=\sqrt{1+\tau^2 R^2}.
$$
Along the circle $\gamma(\theta)=(R\cos\theta,R\sin\theta,0)$, we have $\gamma'(\theta)=\partial_\theta=\frac{R}{\delta_R}(e_\theta-\tau R\xi)$, which has length $RW/\delta_R$. Hence,
$$
\t=\frac{e_\theta-\tau R\xi}{W}
$$
is a unit tangent of $\gamma$ in the orthonormal frame $\{e_r,e_\theta,\xi\}$. Also, since the tangent plane of the section $\{z=0\}$ is spanned by the vectors $\partial_r$ and $\t$, the vector
$$
\eta=\frac{\tau R e_\theta+\xi}{W}
$$
is the upwards unit normal of $\{z=0\}$ along $\Gamma_R$, and $\{e_r,\t,\eta\}$ is a positively oriented orthonormal frame. In particular, the unit normal $N$ along $\Gamma_R$ satisfies
\begin{equation}\label{eqNab}
N=ae_r+b\eta,\qquad a^2+b^2=1,\qquad \n=J\t=a\eta-b e_r.
\end{equation}
As a matter of fact, the exterior unit conormal $\mu$ along $\Gamma_R$ satisfies $\mu=\epsilon\n$ with $\epsilon\in\{1,-1\}$. We can choose a fixed $\epsilon$ since $\partial M$ is connected. 

\subsection{A priori flux bound.}\label{sec31}

The following result follows from the CMC flux and the particular symmetry of the boundary circle.

\begin{lem}\label{fluxlema}
With the previous notation, the following identity holds:
\begin{equation}\label{eqflux}
\frac{1}{2\pi d}\int_0^{2\pi d}a(\theta)\,d\theta=-HR,
\end{equation}
where $d$ is the number of times that $\partial M$ is mapped onto $\Gamma_R$. Furthermore, $HR\leq 1$ and equality holds if and only if $a\equiv -1$ and thus $b=0$ in \eqref{eqNab}.
\end{lem}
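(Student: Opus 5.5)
The identity will come from the flux formula for the Killing field $X=\partial_\theta$. For a CMC-$H$ surface $M$ with boundary $\partial M$ and any Killing field $X$ in $\ekt$, apply the divergence theorem on $M$ to the tangential part $X^T$. Since $\div_M X^T = 2H\langle X,N\rangle$, we get
$$
\int_{\partial M}\langle X,\mu\rangle = 2H\int_M\langle X,N\rangle .
$$
The right-hand side is not a boundary term, so I will make it one by using a closing disk. Take $D$ to be the piece of the section $\{z=0\}$ bounded by $\Gamma_R$, traversed $d$ times, and let $\Omega$ be the $3$-chain with $\partial\Omega=M-dD$. Since $\overline{\div}X=0$, the divergence theorem on $\Omega$ gives $\int_M\langle X,N\rangle = d\int_D\langle X,\eta\rangle$ (with orientation signs to be fixed).

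Next I compute the integral over $D$. At radius $r$ we have $X=\partial_\theta=\frac{r}{\delta}(e_\theta-\tau r\xi)$, and the unit normal of $\{z=0\}$ at radius $r$ is $(\tau r e_\theta+\xi)/\sqrt{1+\tau^2r^2}$. Hence $\langle X,\eta\rangle\equiv0$: the rotational field is tangent to the slice $\{z=0\}$. So the flux through $D$ vanishes, and therefore $\int_{\partial M}\langle X,\mu\rangle=2H\int_M u$ with $\int_M u=0$.

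That cannot be all of it, because the claimed identity involves $H$. I have mixed up which field to use: the identity must come from the flux of the vertical field $\xi$ combined with the position-type structure, or equivalently from the divergence-theorem relation $\int_{\partial M}\mu = 2H\int_D\eta$ in $\r^3$. I will use the Killing field $\xi$ instead. In that case $\int_{\partial M}\langle\xi,\mu\rangle = 2H\, d\int_D\langle\xi,\eta\rangle\,dA$, up to sign.

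The pieces of this computation are as follows.
\begin{itemize}
\item Along $\Gamma_R$ we have $\mu=\epsilon\n=\epsilon(a\eta-be_r)$, so $\langle\xi,\mu\rangle=\epsilon a/W$.
\item On $D$, $\langle\xi,\eta_r\rangle=1/\sqrt{1+\tau^2r^2}$.
\item The area element of $\{z=0\}$ is $\frac{r\sqrt{1+\tau^2r^2}}{\delta^2}\,dr\,d\theta$. The square roots cancel, so $\int_D\langle\xi,\eta\rangle=2\pi\int_0^R \frac{r}{\delta^2}\,dr=\frac{\pi R^2}{\delta_R}$.
\item The boundary length element is $\frac{RW}{\delta_R}\,d\theta$.
\end{itemize}
Putting these together gives $\epsilon\frac{R}{\delta_R}\int_0^{2\pi d}a\,d\theta = \pm 2H d\frac{\pi R^2}{\delta_R}$. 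This simplifies to $\frac{1}{2\pi d}\int a=\mp HR$. Fixing the sign via the orientation convention ($N$ and $\mu$ chosen compatibly with $H>0$) yields $-HR$. Making these sign and orientation conventions consistent is the delicate step. Immersed surfaces need care: the chain $\Omega$ exists only as a current, but the homological version of the divergence theorem suffices, since $\ekt$, or its relevant covering, is simply connected and the computation is local.

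Finally, since $|a|\le1$, the average of $a$ is at least $-1$, so $HR\le1$. If $HR=1$, then $a$, which is continuous, satisfies $a\ge-1$ and averages to $-1$. Hence $a\equiv-1$, and then $b=0$ from $a^2+b^2=1$. Conversely, $a\equiv-1$ forces $HR=1$.
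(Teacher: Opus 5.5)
Your route is the paper's. You take the flux of the vertical Killing field $\xi$ on $M$ via $\div_M\xi^\top=2H\nu$ and close it up with $d$ copies of the horizontal disk $\Omega$ bounded by $\Gamma_R$. Your pointwise ingredients ($\langle\xi,\mu\rangle=\epsilon a/W$, the area and length elements, $\int_\Omega\langle\xi,\eta\rangle=\pi R^2/\delta_R$) are exactly the paper's, and so is the treatment of the bound and the equality case. The detour through $X=\partial_\theta$ only yields $\int_M u=0$ and should be dropped.

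\textbf{The gap is the sign.} You stop at $\frac{1}{2\pi d}\int_0^{2\pi d}a\,d\theta=\mp HR$ and then assert that the orientation convention gives $-HR$. But that sign is what the lemma actually asserts. With $\mp$ you still get $HR\le 1$, but in the equality case you only get $|a|\equiv1$, not $a\equiv-1$. Nor is the sign settled by ``choosing $N$ and $\mu$ compatibly with $H>0$'', since $\mu$ is not a choice. What decides it is the relative orientation of $\psi(M)$ and of the closing disk inside the $2$-cycle. That bookkeeping is precisely what the paper's proof spends its effort on.

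\textbf{How to close it.} Orient $M$ by $N$, i.e.\ $\{\t,\n\}$ positive with $\n=J\t$.
\begin{enumerate}
\item This is the orientation for which the flux of $\xi$ through $M$ equals $\int_M\nu$. It is also the same $N$ that enters $\div_M\xi^\top=2H\langle\xi,N\rangle$ with $S=-\onabla N$.
\item Since $\mu=\epsilon\n$ and $\{\epsilon\n,-\epsilon\t\}$ is positive, the induced orientation of $\partial M$ is $-\epsilon\t$. Hence $\partial[\psi(M)]=-\epsilon d[\Gamma_R]$.
\item The disk $\Omega$, oriented by $\eta$, has boundary orientation $\t$, because $\{e_r,\t,\eta\}$ is positive.
\item So the closed cycle is $[\psi(M)]+\epsilon d[\Omega]$. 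The form $\iota_\xi d\mathrm{vol}$ is closed because $\xi$ is divergence free, and every closed $2$-form on $\ekt\cong\r^3$ or $\s^3$ is exact. Therefore $\int_M\nu=-\epsilon d\,\pi R^2/\delta_R$.
\item Combining this with $2H\int_M\nu=\epsilon\frac{R}{\delta_R}\int_0^{2\pi d}a\,d\theta$, the factor $\epsilon$ cancels and you obtain $-HR$.
\end{enumerate}
This agrees with a Euclidean spherical cap with inner normal, where $a\equiv-HR$ on the boundary.

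Incidentally, what makes the chain argument work is the vanishing of the second (co)homology of $\ekt$, not its simple connectivity. The argument is also global, not local as you state.
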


\begin{proof}

Let $\Omega\subset\{z=0\}$ be disk bounded by $\Gamma_R$ with unit normal $\eta$. In particular, the positive orientation of $\partial\Omega$ is $\t$, since $\{e_r,\t,\eta\}$ is positive. On $\partial M$ we have the positive frame $\{\t,\n\}$. We also ask $\{\mu,\pm\t\}$ to be positively oriented. Because $\mu=\epsilon\n$ and $\{\n,-\t\}$ has the same orientation of $\{\t,\n\}$, then $\{\epsilon\n,-\epsilon\t\}$ is positive. Hence, the orientation of $\partial M$ is determined by $-\epsilon\t$. We consider thus the 2-cycle
$$
Z=[\psi(M)]+\epsilon d [\Omega],
$$
where $d\geq1$ is the number of times that $\Gamma_R$ is covered by $\partial M$. This cycle bounds a 3-chain whose boundary is $Z$, and the divergence theorem to $\xi$ on such a 3-chain yields
$$
0=\int_M\langle N,\xi\rangle+\epsilon d\int_\Omega\langle\eta,\xi\rangle.
$$
For the second integral, recall that in the polar coordinates $(r,\theta)$, the area element in $\{z=0\}$ is $\frac{rW_r}{\delta^2}dr\,d\theta$; here, $W_r=\sqrt{1+\tau^2r^2}$. Hence,
$$
\int_\Omega\langle\eta,\xi\rangle=\int_0^{2\pi}\int_0^R\frac{r}{\delta^2}\,dr\,d\theta=\frac{\pi R^2}{\delta_R}.
$$
We integrate again on $M$, but this time the tangent component $\xi^\top$. Since $\xi$ is Killing, $\div_M\xi^\top=2H\langle\xi,N\rangle$ and the line element of the circle is $d\ell=|\gamma'(\theta)|d\theta=\frac{RW}{\delta_R}d\theta$, the divergence theorem on $M$ yields
$$
2H\int_M\nu=\int_M2H\langle \xi,N\rangle=\int_{\partial M}\langle \xi,\mu\rangle\,d\ell=\frac{\epsilon}{W}\int_{\partial M}a\,d\ell=\frac{\epsilon R}{\delta_R}\int_0^{2\pi d}a(\theta)\,d\theta.
$$
We conclude
$$
\frac{1}{2\pi d}\int_0^{2\pi d}a(\theta)\,d\theta=-HR.
$$
As $|a|\leq1$ we obtain the bound $HR\leq1$ and equality holds if and only if $a=-1$ and then $b=0$.

\end{proof}

\begin{remark}\label{remarkrelacionHR}
This relation gives an a priori restriction between the coordinate radius $R$ and the possible mean curvature $H$ of a compact CMC surface with boundary. Although it will not be used throughout the paper, it would be better to express this restriction in terms of the distance of $\Gamma_R$ to the rotation axis, hence in terms of the distance that a parallel in $S_H$ can attain to the rotation axis. Let us fix $(R,\theta_0,0)\in\Gamma_R$ and consider the radial geodesic $r\mapsto(r,\theta_0,0)$. Thus,
$$
\rho(R)=\mathrm{dist}_{\ekt}(\Gamma_R,(0,0,0))=\int_0^R\frac{dr}{\delta(r)}=\int_0^R\frac{4\,dr}{4+\kappa r^2}=
\left\lbrace
\begin{array}{lc}
\vspace{.1cm}\dfrac{2}{\sqrt{\kappa}}\arctan\left(\dfrac{\sqrt{\kappa}R}{2}\right)&\kappa>0,\\
R,&\kappa=0,\\
\dfrac{2}{\sqrt{-\kappa}}\mathrm{arctanh}\left(\dfrac{\sqrt{-\kappa}R}{2}\right)&\kappa<0.
\end{array}
\right.
$$
Since $HR\leq1$ and $R\mapsto\rho(R)$ is strictly increasing in any case, the maximum distance to the rotation axis that $S_H$ attains is $r_H=\rho(1/H)$, that is
$$
r_H=\left\lbrace
\begin{array}{lc}
\vspace{.1cm}\dfrac{2}{\sqrt{\kappa}}\arctan\left(\dfrac{\sqrt{\kappa}}{2H}\right)&\kappa>0,\\
\dfrac{1}{H},&\kappa=0,\\
\dfrac{2}{\sqrt{-\kappa}}\mathrm{arctanh}\left(\dfrac{\sqrt{-\kappa}}{2H}\right)&\kappa<0.
\end{array}
\right.
$$
Moreover, if equality holds then $a=-1$ and $b=0$ in \eqref{eqNab} and hence the tangent plane of $S_H$ along $\Gamma_R$ is spanned by $\{\t,\xi\}$ and consequently it is vertical, therefore $\Gamma_R$ is the \emph{equator} of $S_H$, the parallel of $S_H$ with largest distance to the rotation axis, and $S_H$ is a symmetric bi-graph along $\Gamma_{1/H}$.
\end{remark}

\subsection{The expressions of $\partial_\mu u$ and $\Q d\z^2$ along $\partial M$.}\label{sec32}

As stated in the beginning of the section, we aim to apply Corollary \ref{corollary} and for that matter we compute both $\partial_\mu u$ and $\Q d\z^2$ along $\partial M$. We do the first next.

\begin{pro}\label{propderconormalu}
Let us denote by $k_R$ and $-\uptau_R$ the geodesic curvature and the torsion of the circle $\Gamma_R$ in $\{z=0\}$, respectively. On $\partial M$ we have
\begin{equation}\label{eqderconormalu}
\partial_\mu u=-\epsilon\frac{RW}{\delta_R}\left(\uptau_R+\sigma(\t,\n)\right)=\epsilon\frac{RW}{\delta_R}\alpha',
\end{equation}
where $\alpha$ is such that $a=\cos\alpha$ and $b=\sin\alpha$.
\end{pro}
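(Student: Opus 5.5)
The plan is to compute $\partial_\mu u$ directly from $u=\langle X,N\rangle$, using only two facts: $X$ is Killing, and along $\Gamma_R$ it is a constant multiple of $\t$. Since $X=\partial_\theta=\frac{r}{\delta}(e_\theta-\tau r\xi)$, along the circle we have
$$X=\frac{RW}{\delta_R}\,\t .$$
This gives $u=0$ on $\partial M$, and the factor $\frac{RW}{\delta_R}$ is constant along the boundary. Write $\mu=\epsilon\n$. Then
$$\partial_\mu u=\epsilon\bigl(\langle\onabla_{\n}X,N\rangle+\langle X,\onabla_{\n}N\rangle\bigr)=\epsilon\Bigl(\langle\onabla_{\n}X,N\rangle-\tfrac{RW}{\delta_R}\sigma(\t,\n)\Bigr).$$
So the first equality in \eqref{eqderconormalu} reduces to proving $\langle\onabla_{\n}X,N\rangle=-\frac{RW}{\delta_R}\uptau_R$.

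For this term I would use the Killing equation. It says that $(V,Y)\mapsto\langle\onabla_V X,Y\rangle$ is skew-symmetric. Both $\n$ and $N$ lie in the normal plane of $\Gamma_R$, which is spanned by $\{e_r,\eta\}$, and on a $2$-plane a skew form is a multiple of the area form. From \eqref{eqNab}, $\n\wedge N=(a\eta-be_r)\wedge(ae_r+b\eta)=-(a^2+b^2)\,e_r\wedge\eta$. Hence
$$\langle\onabla_{\n}X,N\rangle=-\langle\onabla_{e_r}X,\eta\rangle,$$
independently of $a,b$; this is the key point, since it says the unknown normal $N$ does not enter this term. Next, $e_r=\delta\partial_r$ and $X=\partial_\theta$ commute because $\delta$ depends only on $r$. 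Therefore
$$\langle\onabla_{e_r}X,\eta\rangle=\langle\onabla_X e_r,\eta\rangle=\frac{RW}{\delta_R}\langle\onabla_{\t}e_r,\eta\rangle .$$
The quantity $\langle\onabla_{\t}e_r,\eta\rangle$ is exactly the torsion of $\Gamma_R$ viewed in the frame $\{\t,e_r,\eta\}$ of the plane $\{z=0\}$. It can be checked from the connection table: it is a combination of $\tau$, $\beta_R$ and $W$. Matching signs with the convention that the torsion is $-\uptau_R$ should give the stated formula.

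For the second equality I would differentiate $N=\cos\alpha\, e_r+\sin\alpha\,\eta$ along the boundary with respect to arclength $s$. The Frenet-type relations in the frame $\{\t,e_r,\eta\}$ are
$$\onabla_{\t}e_r=-k_R\t+T\eta,\qquad \onabla_{\t}\eta=-Te_r+(\cdot)\,\t,\qquad T=\langle\onabla_{\t}e_r,\eta\rangle .$$
Using $\n=\cos\alpha\,\eta-\sin\alpha\, e_r$, the $\t$-components drop out and we get $\langle\onabla_{\t}N,\n\rangle=\alpha_s+T(\cos^2\alpha+\sin^2\alpha)=\alpha_s+T$. Hence
$$\sigma(\t,\n)=-\langle\onabla_{\t}N,\n\rangle=-\alpha_s-T .$$
With $T$ identified as $-\uptau_R$ (up to the paper's sign convention) this yields $\uptau_R+\sigma(\t,\n)=-\alpha_s$. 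Finally, $d\ell=\frac{RW}{\delta_R}d\theta$ converts $\alpha_s$ into $\frac{\delta_R}{RW}\alpha'(\theta)$, and the resulting factor is absorbed into the stated expression.

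The main obstacle is sign and orientation bookkeeping. One has to fix the convention for the torsion $-\uptau_R$ and the direction in which $\partial M$ traverses $\Gamma_R$ (since $\mu=\epsilon\n$ and $\t$ may be reversed). One also has to make the prefactor $\frac{RW}{\delta_R}$ in front of $\alpha'$ consistent with the arclength conversion; I would check this last point directly against the explicit value of $T$ from the connection table.
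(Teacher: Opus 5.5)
Your route is the paper's: the same splitting $\partial_\mu u=\epsilon\bigl(\langle\onabla_{\n}X,N\rangle-\langle X,S\n\rangle\bigr)$ with $X=\frac{RW}{\delta_R}\t$, the same commutation $[e_r,X]=0$ to pass to $\onabla_{\t}e_r$, and the same differentiation of $N=\cos\alpha\,e_r+\sin\alpha\,\eta$ along the boundary. The one variation is that you use the skew-symmetry of $\onabla X$ on $\mathrm{span}\{e_r,\eta\}$ to get $\langle\onabla_{\n}X,N\rangle=-\langle\onabla_{e_r}X,\eta\rangle$ at once. The paper instead also uses $[X,\eta]=0$ and the Darboux equation for $\eta$, and then pairs with $N$. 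Your version is a clean shortcut but changes nothing essential.

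The sign and normalization bookkeeping you deferred is, however, resolved wrongly in your text.

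First, $T=\langle\onabla_{\t}e_r,\eta\rangle$ equals $+\uptau_R$, not $-\uptau_R$. From the connection table, $\onabla_{\t}e_r=\frac{1}{W}\bigl((\beta_R+\tau^2R)e_\theta-\tau\xi\bigr)$, and pairing with $\eta=(\tau Re_\theta+\xi)/W$ gives $T=\frac{\tau(R\beta_R+\tau^2R^2-1)}{W^2}=\frac{(4\tau^2-\kappa)\tau R^2}{4W^2}=\uptau_R$. This is exactly the paper's Darboux equation $\onabla_{\t}e_r=k_R\t+\uptau_R\eta$.

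There is no conflict with "the torsion is $-\uptau_R$". Since $\onabla_{\t}\t=-k_Re_r$, the Frenet normal of $\Gamma_R$ is $-e_r$ and its binormal is $\eta$, so the Frenet torsion is $\langle\onabla_{\t}(-e_r),\eta\rangle=-T$. For the same reason the $\t$-coefficient of $\onabla_{\t}e_r$ is $+k_R$, not $-k_R$; that slip is harmless. With your tentative choice $T=-\uptau_R$, your own formulas would give $\langle\onabla_{\n}X,N\rangle=+\frac{RW}{\delta_R}\uptau_R$ and $\sigma(\t,\n)-\uptau_R=-\alpha_s$. So your sentence claiming this yields $\uptau_R+\sigma(\t,\n)=-\alpha_s$ is internally inconsistent.

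Second, $\alpha'$ in the statement is the arclength derivative $\t(\alpha)=d\alpha/ds$. So $T+\sigma(\t,\n)=-\alpha_s$ gives $\partial_\mu u=\epsilon\frac{RW}{\delta_R}\alpha'$ directly, with no conversion. Had you switched to $d\alpha/d\theta$ via $d\ell=\frac{RW}{\delta_R}d\theta$, the factor would cancel, giving $\partial_\mu u=\epsilon\,d\alpha/d\theta$; it is not "absorbed".

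A useful by-product of your computation is that the outer identity $\partial_\mu u=\epsilon\frac{RW}{\delta_R}\alpha'$ does not depend on the value of $T$, since $T$ cancels. Only the middle expression needs $T=\uptau_R$.
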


\begin{proof}
Recall that
$$
\partial_\mu u=\mu\langle X,N\rangle=\langle\onabla_\mu X,N\rangle+\langle X,\onabla_\mu N\rangle=\langle\onabla_\mu X,N\rangle-\langle X,S\mu\rangle.
$$
We compute first the second term. Since $\mu=\epsilon\n$ and along the boundary $X=\frac{RW}{\delta_R}\t$, we have
$$
\langle X,S\mu\rangle=\epsilon\frac{RW}{\delta_R}\langle\t,S\n\rangle=\epsilon\frac{RW}{\delta_R}\sigma(\t,\n).
$$
To compute the first term, it suffices to compute $\onabla_\n X$. Since $[X,e_r]=[X,\eta]=0$, the following commutation holds:
$$
\onabla_{e_r}X=\onabla_X e_r,\qquad \onabla_\eta X=\onabla_X\eta.
$$
Now, we use that $\n=a\eta-b e_r$, hence
$$
\onabla_\n X=a\onabla_\eta X-b\onabla_{e_r}X=a\onabla_X\eta-b\onabla_Xe_r.
$$
Evaluating on $\partial M$,
$$
\onabla_\n X=\frac{RW}{\delta_R}\left(a\onabla_\t\eta-b\onabla_\t e_r\right).
$$
Now we use the Darboux equations
$$
\onabla_\t e_r=k_R\t+\uptau_R\eta,\quad\onabla_\t\eta=-\uptau_Re_r,\qquad k_R=\frac{1}{R}+\frac{(4\tau^2-\kappa)R}{4W^2},\ \uptau_R=\frac{(4\tau^2-\kappa)\tau R^2}{4W^2}.
$$
Geometrically, $k_R$ is the curvature of the circle $\Gamma_R$ in $\{z=0\}$ and it must not be misunderstood with $\beta_R$ that was defined as the curvature of the projected circle in the base; they only agree if $\tau=0$. In fact, $k_R=\frac{\beta_R+2\tau^2R}{W^2}$.
Also, $-\uptau_R$ is the torsion of $\Gamma_R$. With these equations,
$$
\onabla_\n X=\frac{RW}{\delta_R}\left(a\onabla_\t\eta-b\onabla_\t e_r\right)=\frac{RW}{\delta_R}\left(-a\uptau_R e_r-bk_R\t-b\uptau_R\eta\right).
$$
Taking product with $N=ae_r+b\eta$ yields
$$
\langle\onabla_\n X,N\rangle=-\frac{RW}{\delta_R}\uptau_R.
$$
Finally, since $N=a e_r+b\eta$ with $a^2+b^2=1$, there exists $\alpha$ such that
$$
a=\cos\alpha,\qquad b=\sin\alpha.
$$
The angle $\alpha$ is defined locally, but its derivative $\alpha'=t(\alpha)=\frac{d\alpha}{ds}$ is global; indeed $a'=-b\alpha'$, $b'=a\alpha'$ and thus $\alpha'=ab'-ba'$. A direct computation yields
$$
\onabla_\t N=a k_R\t+(\alpha'+\uptau_R)\n.
$$
Consequently, $\sigma(\n,\t)=-\alpha'-\uptau_R$ and we conclude with \eqref{eqderconormalu}.

\end{proof}

Our next step is to relate $\Q d\z^2(\t,\t)$ with the expression derived in Proposition \ref{propderconormalu}. To save notation we write $\Q(\t,\t)\equiv \Q d\z^2(\t,\t)$, although we emphasize that only the latter can be evaluated on $(\t,\t)$. An evaluation yields
\begin{align*}
\Q(\t,\t)&=(H+i\tau)(\sigma(\t,\t)-H-i\sigma(\t,\n))+\left(\tau^2-\frac{\kappa}{4}\right)(\langle\xi,\t\rangle-i\langle\xi,\n\rangle)^2\\
&=(H+i\tau)(-k_Ra-H+i(\alpha'+\uptau_R))+\frac{4\tau^2-\kappa}{4W^2}(\tau R+ia)^2
\end{align*}
We collect the terms in a suitable way and write
$$
\Q(\t,\t)=(-\tau+iH)\alpha'-\left(H+\frac{a}{R}\right)(D+iE),\qquad D=H+\frac{(4\tau^2-\kappa)Ra}{4W^2},\ E=\frac{\tau\delta_R}{W^2}.
$$
We aim to isolate the term of $\alpha'$ in the imaginary part of $\Q(\t,\t)$ and for that matter let $m=D-iE$ and observe that
$$
\im(m\Q(\t,\t))=\mathcal{B}\alpha',\qquad \mathcal{B}=\frac{4H^2+\kappa}{4}+\frac{4\tau^2-\kappa}{4W^2}(1+HRa).
$$
At this point, we remark that the coefficient $\mathcal{B}$ is strictly positive on $\partial M$ because $|a|\leq1$, the flux \eqref{eqflux} and the assumptions \eqref{eqcondiciones}. This expression of the imaginary part of $m\Q(\t,\t)$ only holds on $\partial M$ since $D$ is only defined on $\partial M$, and in order to apply Lemma \ref{lemmaholomorfa} we would need $m\Q$ to be defined on the whole disk. We construct a suitable extension next. The function $m=D-iE$ lies in the lower half-plane because $E>0$; in particular, there is a global choice of an argument $\varphi$ of $m$ along the whole $\partial M$, that is $m=|m|e^{i\varphi}$ with $\varphi\in(-\pi,0)$. Now we take advantage that $M$ is a disk by taking the harmonic extension of $\varphi$ to the whole $M$, that is, we consider the unique solution to
$$
\left\lbrace
\begin{array}{rll}
\Delta v=0&\mathrm{on}& M,\\
v=\varphi&\mathrm{on}&\partial M.
\end{array}
\right.
$$
Now, let $h$ be a harmonic conjugate of $v$, i.e. such that $h+iv$ is a holomorphic function. Finally, we define $G=e^he^{iv}$, which is holomorphic, does not vanish on $M$ and on $\partial M$ we have $G_{|_{\partial M}}=e^he^{i\varphi}=\frac{e^h}{|m|}m$. With all this setting, let us define $\Phi=\epsilon G\Q d\z^2$, which is a holomorphic differential on $M$ that satisfies
$$
\im\Phi(\t,\t)=\epsilon\frac{e^h}{|m|}\mathcal{B}\alpha'=\frac{e^h}{|m|}\mathcal{B}\frac{\delta_R}{RW}\partial_\mu u=c\partial_\mu u,\qquad c=\frac{e^h\delta_R \mathcal{B}}{RW|m|}>0.
$$
By Corollary \ref{corollary} we conclude $\Phi\equiv0$ and since $G$ never vanishes we obtain $\Q\equiv 0$ and $\langle X,N\rangle\equiv0$. This implies that $\psi(M)$ is invariant under the rotations pointwise fixing the $z$-axis.

\begin{remark}
If $\tau=0$ then $E=0$ and the aforementioned argument fails. Nonetheless, for this case we have $W=1$ and
$$
\im\Q(\t,\t)=H\alpha',\qquad \partial_\mu u=\epsilon\frac{R}{\delta_R}\alpha'.
$$
Thus, it suffices to take $\Phi=\epsilon\Q$ and $c=\frac{H\delta_R}{R}$ to conclude. 
\end{remark}

At this point, we only know that $\psi(M)$ is rotational but do not know if $\psi(M)\subset S_H$, or even in such a case if $\psi(M)$ lies contained in a spherical cap of $S_H$ or whether $\psi$ parametrizes that cap diffeomorphically. We answer these questions next to conclude with the proof of Theorem \ref{th}.

We begin by proving that $\psi(M)$ necessarily intersects the rotation axis. Indeed, consider the vertical projection
$$
\Pi\circ\psi:M\to\r^2,\qquad \Pi\circ \psi(p)=(x(\psi(p)),y(\psi(p)),
$$
where by $x(\psi),y(\psi)$ we mean the first and second coordinate, respectively, of $\psi$ in the model $\rkt$. If $\psi(M)$ does not intersect the rotation axis, then $(0,0)\notin\Pi\circ\psi(M)$. By compactness, there exist $0<R_1<R_2$ such that $\Pi\circ\psi(M)$ lies in the annulus 
$$
\{(x,y)\in\r^2\colon R_1^2\leq x^2+y^2\leq R_2^2\}.
$$
Now, consider the restriction $\Pi\circ\psi_{|_{\partial M}}:\partial M\to\Gamma_R$. Since $\psi$ is an immersion onto $\Gamma_R$, it is a covering map of a certain degree $d\neq0$; in fact, $\Pi\circ\psi_{|_{\partial M}}$ maps $\partial M$ as a loop twisting around the origin a number of times $|d|$. On the other hand, $\Pi\circ\psi_{|_{\partial M}}$ was defined as a restriction to $\partial M$ of $\Pi\circ\psi$ defined on $M$, and every loop that extends to a disk is null-homologous, yielding the desired contradiction. 

Now, we prove that $\psi(M)\subset S_H$. Let be $\textbf{o}\in\mathrm{int}(M)$ such that $\psi(\textbf{o})$ lies in the rotation axis. Since $\psi(M)$ is rotational then the unit normal at $\psi(\textbf{o})$ is $\pm\xi$. In particular, $\psi(M)$ is a radial graph near $\psi(\textbf{o})$ and the uniqueness of the Dirichlet problem with zero boundary data and an analytic prolongation argument allows us to conclude that $\psi(M)\subset S_H$.

Next, we exhibit that $\psi(M)$ actually lies contained in one of the spherical caps determined by $\Gamma_R$ in $S_H$. Let us decompose $S_H$ by 
$$
S_H=S_H^+\cup S_H^-\cup\Gamma_R.
$$
Let be $\mathcal{C}=\psi^{-1}(\Gamma_R)$. Since $\psi$ is an immersion in particular it is a local diffeomorphism, hence for every $q\in\Gamma_R$ there exists $p\in\partial M$ such that a neighborhood $V$ of $p$ is mapped diffeomorphically to a neighborhood of $q$. Since $V$ is topologically a half-disk with boundary at $\partial M$ and by compactness, we can choose a neighborhood $V$ of $\partial M$ such that $\mathcal{C}\cap V=\partial M$. Thus, $V\backslash\partial M$ is mapped by $\psi$ to one of the spherical caps, say $S_H^+$. Assume by contradiction that $\psi(M)$ has also points in $S_H^-$. In particular, $\mathcal{C}$ has points in $\mathrm{int}(M)$. Because $\psi:\mathrm{int}(M)\to S_H$ is a local diffeomorphism and $\mathcal{C}\backslash\partial M$ is compact since it lies in $M\backslash V$ which is compact,
$$
\mathcal{C}=\partial M\cup C_1\cup\cdots\cup C_k,
$$
where each $C_j$ is a closed simple curve that bounds a disk $D_j$. Define
$$
U=M\backslash\bigcup_{j=1}^k\mathrm{int}(D_j).
$$
If some $D_i\subset D_j$ then we just take away the whole $D_j$. In this scenario, $U$ is closed and connected with $\partial M\subset U$ and such that no point of $\mathrm{int}(U)$ maps via $\psi$ to $\Gamma_R$. Since $V\subset U$, necessarily $\psi(\mathrm{int}(U))\subset S_H^+$. In this case, $\psi:\mathrm{int}(U)\to S_H^+$ is a local diffeomorphism and \emph{proper}; indeed, if $K\subset S_H^+$ is compact, then its preimage in $U$ is compact and does not intersect $\partial U$, hence is compact in $\mathrm{int}(U)$. A proper local diffeomorphism is both open and closed, and since $S_H^+$ is connected then $\psi$ is surjective and hence a covering map. Since $S_H^+$ is simply connected, the covering has a single sheet, proving that actually $\psi:\mathrm{int}(U)\to S_H^+$ is a diffeomorphism. This is however a contradiction since $\mathrm{int}(U)$ has holes in it and $S_H^+$ is simply connected. This contradiction proves that $\psi:\mathrm{int}(M)\to S_H^+$ is a diffeomorphism. Finally, this also extends to the boundary since otherwise there would exist $p_1,p_2\in\partial M$ such that $\psi(p_1)=\psi(p_2)\in\Gamma_R$. Thus, we would find $V_1,V_2$ neighborhoods of $p_1,p_2$ respectively such that $\psi(V_1)\cap\psi(V_2)\neq\varnothing$ in $S_H^+$, contradicting that $\psi$ is a diffeomorphism in $\mathrm{int}(M)$.

In the case of the Berger spheres, $\kappa\leq 4\tau^2$ implies that $S_H$ is embedded \cite{tor} and hence $\Gamma_R$ also separates $S_H$ into two caps and the argument holds.

\subsection*{Ethics declarations}

The author has no conflict of interest to declare that are relevant to the content of this article. No data were used to support this study.

\subsection*{AI usage}

AI was used to thoroughly check all the boundary computations, a task performed by the author in other occasions with some specialized software, and for validate some arguments. Anyway, the author takes full responsibility for the final content of the manuscript.

%%%%%%
\subsection*{Acknowledgment}

Partially supported by the grant PID2025-167696NB-I00, funded by MICIU/AEI/ 10.13039/501100011033.

Universidad de Murcia. \emph{E-mail address:} jabueno@um.es


\begin{thebibliography}{99}


\bibitem{abro1}
U. Abresch and H. Rosenberg,
A Hopf differential for constant mean curvature surfaces in
$\mathbb S^2\times\mathbb R$ and $\mathbb H^2\times\mathbb R$,
\emph{Acta Math.} \textbf{193} (2004), 141--174.

\bibitem{abro2}
U. Abresch, H. Rosenberg,
Generalized Hopf differentials,
\emph{Mat. Contemp.} \textbf{28} (2005), 1--28.

\bibitem{ale}
A. D. Alexandrov,
Uniqueness theorems for surfaces in the large. I,
\emph{Vestnik Leningrad Univ.} \textbf{11} (1956), no. 19, 5--17
(in Russian).

\bibitem{alp}
L. J. Al\'ias, R. L\'opez and B. Palmer,
Stable constant mean curvature surfaces with circular boundary,
\emph{Proc. Amer. Math. Soc.} \textbf{127} (1999), 1195--1200.

\bibitem{bdc}
J. L. Barbosa, M. do Carmo,
Stability of hypersurfaces with constant mean curvature,
\emph{Math. Z.} \textbf{185} (1984), 339--353.

\bibitem{bdce}
J. L. Barbosa, M. do Carmo and J. Eschenburg,
Stability of hypersurfaces of constant mean curvature in Riemannian
manifolds,
\emph{Math. Z.} \textbf{197} (1988), 123--138.

%\bibitem{buloMJOM} 
%A. Bueno, R. López, 
%Stability of cylinders in $\mathbb{E}(\kappa,\tau)$ homogeneous spaces, 
%\emph{Mediterr. J. Math.} \textbf{22} (2025).

\bibitem{cohi}
R. Courant, D. Hilbert,
Methods of Mathematical Physics, vol. I,
Interscience Publishers, New York, 1953.

\bibitem{dan}
B. Daniel,
Isometric immersions into $3$-dimensional homogeneous manifolds,
\emph{Comment. Math. Helv.} \textbf{82} (2007), 87--131.

\bibitem{dhm} 
B. Daniel, L. Hauswirth, P. Mira, 
Constant mean curvature surfaces in homogeneous manifolds, 
\emph{Korea Institute for Advanced Study}, Seoul, Korea, 2009.


\bibitem{dami} 
B. Daniel, P. Mira, 
Existence and uniqueness of constant mean curvature spheres in ${\rm Sol}_3$, 
{\it J. Reine Angew. Math.}, {\bf 685} (2013), 1--32.


\bibitem{esp}
J. M. Espinar,
The spherical cap conjecture for immersed disks,
preprint (2026), arXiv:2609.24766.



\bibitem{femiTAMS} I. Fern\'andez, P. Mira, Holomorphic quadratic differentials and the Bernstein problem in Heisenberg space. 
{\it Trans. Amer. Math. Soc.}, {\bf 361}, no 11, (2009), 5737--5752.

\bibitem{femiICM} I. Fern\'andez, P. Mira, Constant mean curvature surfaces in 3-dimensional Thurston geometries. In \emph{Proceedings of the International Congress of Mathematicians}, Volume II (Invited Conferences), pages 830--861. Hindustan Book Agency, New Delhi, 2010. Preprint at http://arXiv.org/abs/1004.4752 .

\bibitem{gamiJDG} 
J.A. Gálvez, P. Mira, 
Uniqueness of immersed spheres in three-manifolds, 
\emph{J. Differential Geometry} \textbf{116} (2020), 459--480.

\bibitem{hop}
H. Hopf,
Differential Geometry in the Large,
\emph{Lecture Notes in Mathematics} \textbf{1000},
Springer-Verlag, Berlin, 1983.

\bibitem{loplibro}
R. L\'opez,
Constant Mean Curvature Surfaces with Boundary,
Springer Monographs in Mathematics,
Springer, Berlin, 2013.

\bibitem{lop1}
R. L\'opez,
Open problems on compact constant mean curvature surfaces with boundary,
in \emph{Recent Advances in Differential Geometry and Related Areas},
Contemporary Mathematics, vol. 821,
American Mathematical Society, 2025, 105--118.
arXiv:2505.01143.

\bibitem{lop2}
R. L\'opez,
Constant mean curvature disks with circular boundary in the $3$-sphere
and in hyperbolic space,
preprint (2026), arXiv:2609.39790.

\bibitem{manu}
D. Maximo, I. Nunes,
Constant mean curvature disks with circular boundary,
preprint (2026), arXiv:2609.28929.

\bibitem{mmpr}
W. H. Meeks III, P. Mira, J. Pérez and A. Ros,
Constant mean curvature spheres in homogeneous three-manifolds,
\emph{Invent. Math.} \textbf{224} (2021), 147--244.

\bibitem{sou}
R. Souam,
On stable constant mean curvature surfaces in
$\mathbb S^2\times\mathbb R$ and $\mathbb H^2\times\mathbb R$,
\emph{Trans. Amer. Math. Soc.} \textbf{362} (2010), 2845--2857.


\bibitem{tor}
F. Torralbo,
Rotationally invariant constant mean curvature surfaces in homogeneous
$3$-manifolds,
\emph{Differential Geom. Appl.} \textbf{28} (2010), no. 5, 593--607.

\bibitem{tour}
F. Torralbo, F. Urbano,
Compact stable constant mean curvature surfaces in homogeneous
$3$-manifolds,
\emph{Indiana Univ. Math. J.} \textbf{61} (2012), no. 3, 1129--1156.

\bibitem{zha}
Y. Zhang,
On the stability of constant mean curvature surfaces in
$M_k^2\times\mathbb R$ with circular boundary,
\emph{Chinese Ann. Math. Ser. A} \textbf{36} (2015), no. 4, 421--428.


\end{thebibliography}
\end{document}